\documentclass[APA,LATO1COL]{WileyNJD-v2}

\articletype{Original Article}%

\received{10 May 2021}
\revised{2 June 2021}
\accepted{4 June 2021}

\raggedbottom

\definecolor{Base02}{HTML}{073642}
\input{tikz_header}

\hypersetup{
    colorlinks,
    linkcolor=Base02,
    citecolor=Base02,
    urlcolor=Base02
}

\usepackage[percent]{overpic}

\usepackage{amsmath,amsthm,amssymb}
\usepackage{graphicx}
\usepackage{enumitem}

\usepackage[noabbrev,nameinlink]{cleveref}

\newcommand{\abs}[2][]{#1\lvert #2 #1\rvert}%{\mathopen{}\mathclose\bgroup\left|#1\right|\egroup}
\newcommand{\ceil}[2][]{\lceil #2 \rceil}
\newcommand{\floor}[2][]{\lfloor #2 \rfloor}

\newcommand{\flx}{\ensuremath{\floor{x}}}
\newcommand{\clx}{\ensuremath{\ceil{x}}}
\newcommand{\flc}{\ensuremath{\floor{c}}}
\newcommand{\clc}{\ensuremath{\ceil{c}}}

\newcommand{\err}{\ensuremath{\operatorname{err}}}
\newcommand{\rd}{\ensuremath{\operatorname{rd}}}

\newcommand{\R}{\mathbb{R}}

\DeclareMathOperator{\VV}{\mathbb{V}}
\DeclareMathOperator{\EE}{\mathbb{E}}

\DeclareMathOperator{\PP}{\mathbb{P}}

\usepackage{bbm}

\renewcommand{\d}[1]{\ensuremath{\,\mathrm{d}#1}}

\usepackage{xspace}
\newcommand{\SR}{`stochastic round'\xspace}
\newcommand{\RtN}{`round to nearest'\xspace}

\begin{document}

\title{Non-asymptotic moment bounds for random variables rounded to non-uniformly spaced sets}%\protect\thanks{This is an example for title footnote.}}

\author[1]{Tyler Chen}

\authormark{Tyler Chen}

\address{\orgdiv{Department of Applied Mathematics}, \orgname{University of Washington}, \orgaddress{\state{WA}, \country{USA}}}

\corres{\email{chentyl@uw.edu}}

\presentaddress{
Lewis Hall 201,
Box 353925,
University of Washington,
Seattle, WA 98195-3925}

\abstract[Summary]{
    We study the effects of rounding on the moments of random variables.
    Specifically, given a random variable \( X \) and its rounded counterpart \( \rd(X) \), we study \( \abs[\big]{ \!\EE[X^k] - \EE[\rd(X)^{k}] } \) for non-negative integer \( k \).
    We consider the case that the rounding function \( \rd : \mathbb{R}\to\mathbb{F} \) corresponds either to (i) rounding to the nearest point in some discrete set \( \mathbb{F} \) or (ii) rounding randomly to either the nearest larger or smaller point in this same set with probabilities proportional to the distances to these points.
    In both cases, we show, under reasonable assumptions on the density function of \( X \), how to compute a constant \( C \) such that \(  \abs[\big]{ \! \EE[X^k] - \EE[\rd(X)^{k}] } < C\epsilon^2 \), provided \( \abs{ \rd(x) - x } \leq \epsilon \: E(x) \), where \( E : \mathbb{R} \to \mathbb{R}_{\geq 0} \) is some fixed positive piecewise linear function. 
    Refined bounds for the absolute moments \( \EE\!\big[ |X^k-\rd(X)^{k}| \big] \) are also given.
} 
\keywords{rounded data, measurement error, rounding error, moments}

\jnlcitation{\cname{%
\author{T. Chen}} (\cyear{2021}), 
\ctitle{Non-asymptotic moment bounds for random variables rounded to non-uniformly spaced sets}, \cjournal{Stat}, \cvol{??}.}

\maketitle

%\footnotetext{\textbf{Abbreviations:} ANA, anti-nuclear antibodies; APC, antigen-presenting cells; IRF, interferon regulatory factor}

\section{Introduction}

Rounded data is ubiquitous; measurements of length may be given in terms of the distance between consecutive markings on a ruler, weight may be measured to the nearest kilogram, age may be rounded to the nearest year, real numbers may be represented as floating point numbers, etc.
While it is often assumed that rounding error is small in comparison to other sources of error such sampling error, we are increasingly faced with settings in which there is a large amount of low precision data, making the task of understanding how the distribution of a random variable \( X \) and a rounded random variable \( \rd(X) \) relate increasingly important.

Famously, Sheppard studied the moments of \( \rd(X) \) obtained by rounding \( X \) to a set of uniform spacing \( h \).
In particular, he showed that, under suitable conditions on the density of \( X \), \( \EE[\rd(X)] \approx \EE[X] \) and \( \VV[\rd(X)] \approx \VV[X] + h^2/12 \) \cite{sheppard_97}.
The setting of rounding to a uniformly spaced set has remained of interest, with more recent work focusing on providing estimates and bounds under weaker conditions or for more general cases \cite{hall_82,tricker_90,wilrich_05,vardeman_05,janson_06,bai_zheng_zhang_hu_09,schneeweiss_komlos_ahmad_10,ushakov_ushakov_17}.
However, such past work makes critical use of the uniform spacing between points and is therefore neither applicable nor easily generalizable to the analysis of rounding to non-uniformly spaced sets.
The idea to consider a rounding random variable rounded to finite precision has been considered \cite{monahan_85}, although the statistical properties were never studied in detail.

In this paper, we show how to obtain bounds on the moments of \( \rd(X) \) for a wide range of rounding modes.
The techniques we develop for our analysis differ significantly from past work in that they make very limited assumptions about the set to which we are rounding; we simply require that the distance between consecutive points is bounded locally.
As a result, our techniques are applicable not only to the analysis of floating point number systems, but also to sets with irregular inter-point spacing such as those which might arise from sensor arrays.

\subsection{Related work}

The study of rounding random variables is closely related to the study of histograms.
%The histogram was introduced in the 1890s by Pearson , and remains a widely used method for density estimation today.
Much of the theory on histograms focuses the quality of the histogram density (piecewise constant density function with mass of each bin equal to the mass of the underlying random variable over the given bin) \cite{freedman_diaconis_81,chaudhuri_motwani_narasayya_98,knuth_19}.
This differs slightly from the rounded random variable \( \rd(X) \) studied in this paper, which is a discrete random variable supported at the midpoints of the histogram bins, although the information contained in the two approaches is identical.
However, as with the study of rounding random variables, most analysis of histograms for density estimation study the case of uniformly spaced histogram bins.

Finally, we remark on several areas which are broadly related to this paper, and may be of interest to readers.
First, there is a great deal of work on the study of distribution quantization, which seeks to find a discrete random varibale to represent a continuous one \cite{graf_luschgy_07}.
Second, we contrast our work with rounding error analysis which makes statistical assumptions about the rounding errors incurred by a numerical algorithm \cite{wilkinson_63,higham_mary_19,connolly_higham_mary_21}.
We study the effect of deterministic perturbations to random variables; in fact, many of the results in this paper are derived directly from the deterministic structure of rounding errors.

\section{Setup}

\subsection{Finite precision number systems and rounding functions}

Let \( \mathbb{F} \subset \mathbb{R} \) be a discrete set on which the rounded random variable will be supported, and for notational convenience define \( \clx := \min\{ z \in \mathbb{F} :  z \geq x \} \) and \( \flx := \max\{ z \in \mathbb{F} : z \leq x \} \).
To ensure these quantities are well defined, we will assume that \( \inf \{ |x-y| : x,y \in \mathbb{F}, x\neq y \} > 0 \).

\label{sec:finite_precision}
\begin{figure}[ht]
\centering
        \begin{tikzpicture}

\def\x{10}
\def\y{\e*\x}

\def\a{11}
\def\b{12.5}
\def\e{(\b-\a)/(\a+\b)}
\def\ee{(1+\e)/(1-\e)}

\def\hh{.5}

% plot e(x)^k
\draw[Base02,line width=1,cap=round]
    (\a,0) -- ({(\a+\b)/2},{(\a-\b)/2});

\draw[Base02,line width=1,cap=round]
    ({(\a+\b)/2},{(\b-\a)/2}) -- (\b,0);

\path[Base02,line width=1,cap=round]
    (\a,{\b-\a}) -- (\b,{\a-\b});

% draw axes

\draw[<->] (\a-\hh,0) -- (\b+\hh,0); %node[below] {\(x\)};

% draw a and b
\def\h{.05}
\draw[line width=1,cap=round] (\a,-\h) -- (\a,\h);% node [above=.05,inner sep=0.6] {\(a\)};

\draw[dotted,cap=round] 
    (\a,{(\a-\b)}) -- (\a,{(\b-\a)});
\draw[dotted,cap=round] 
    (\b,{(\a-\b)}) -- (\b,{(\b-\a)});
 
\draw[line width=1,cap=round] (\b,-\h) -- (\b,\h);
    \node[below] at (\b,{\a-\b}) {\(\lceil x \rceil\)};
    \node[below] at (\a,{\a-\b}) {\(\lfloor x \rfloor\)};

\end{tikzpicture}
         \hspace{5em}
         \begin{tikzpicture}

\def\x{10}
\def\y{\e*\x}

\def\a{11}
\def\b{12.5}
\def\e{(\b-\a)/(\a+\b)}
\def\ee{(1+\e)/(1-\e)}

\def\hh{.5}

% plot e(x)^k
\path[left color=Base02, right color=white,shading path={draw=transparent!0,line width=1,cap=round}]
    (\a,0) -- (\b,{-(\b-\a)});

\path[right color=Base02, left color=white,shading path={draw=transparent!0,line width=1,cap=round}]
    (\a,{\b-\a}) -- (\b,0);

% draw axes

\draw[<->] (\a-\hh,0) -- (\b+\hh,0); % node[below] {\(x\)};

% draw a and b
\def\h{.05}
\draw[line width=1,cap=round] (\a,-\h) -- (\a,\h);% node [above=.05,inner sep=0.6] {\(a\)};
\draw[line width=1,cap=round] (\b,-\h) -- (\b,\h);

\draw[dotted,cap=round] 
    (\a,{(\a-\b)}) -- (\a,{(\b-\a)});
\draw[dotted,cap=round] 
    (\b,{(\a-\b)}) -- (\b,{(\b-\a)});
 
\draw[line width=1,cap=round] (\b,-\h) -- (\b,\h);
    \node[below] at (\b,{\a-\b}) {\(\lceil x \rceil\)};
    \node[below] at (\a,{\a-\b}) {\(\lfloor x \rfloor\)};

\end{tikzpicture}

    \caption{Error \( \err(x) := \rd(x) - x \) for selected rounding functions.
    \emph{Left}: \RtN, 
    \emph{Right}: \SR (darker colors represent higher probability).}
    \label{fig:rounding_functions}
\end{figure}
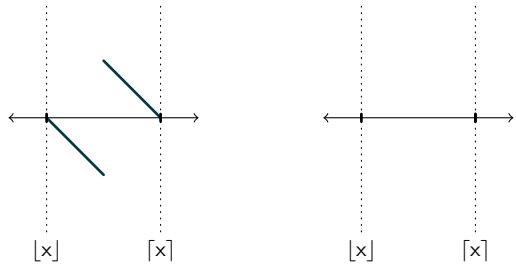

Given \( \mathbb{F} \), we consider two rounding functions \( \rd : \mathbb{R} \to \mathbb{F} \) defined by:
\hypertarget{rounding_schemes}{} % for links to rounding schemes
\begin{align*}
    \text{round to nearest}
    &&
    \rd(x) 
    &:= \begin{cases}
        \flx, & x \leq \frac{1}{2}(\flx +  \clx )\\ 
        \clx, & x > \frac{1}{2}(\flx +  \clx )
    \end{cases}
    \\
    \text{stochastic rounding}
    &&
    \rd(x) 
    &:= \begin{cases}
        \flx, & \text{w.p. } 1 - (x-\flx)/(\clx - \flx ) \\ 
        \clx, & \text{w.p. } (x-\flx)/(\clx - \flx )
    \end{cases}
\end{align*}
The first is the standard \RtN scheme, which minimizes the distance between \( X \) and a random variable supported on \( \mathbb{F} \) in many metrics; e.g. ``earth mover'' distance, \( L^p \) norm, etc. The second is a randomized scheme which has gained popularity in recent years, particularly in machine learning \cite{gupta_agrawal_gopalakrishnan_narayanan_15,alistarh_grubic_li_tomioka_vojnovic_17,connolly_higham_mary_21}.
These schemes are illustrated in \cref{fig:rounding_functions}.
%We will not concern ourselves too much with how ties are broken in the round to nearest scheme, since we deal only with continuous random variables where the probability of drawing a sample at the midpoint is zero.
%For completeness, we may assume the samples are rounded down.

Once \( (\mathbb{F}, \rd) \) has been specified, we can consider how performing operations in the finite precision number system compare to performing the operations exactly.
For notational convenience, we define the error function,
\begin{align*}
    \err(x) := \rd(x) - x
\end{align*}
which tells us both the size and direction of rounding errors.
In principle, we could use this explicitly to compute quantities such as \( \EE[\rd(X)] \), but it would be exceedingly tedious to perform a separate analysis for every finite precision number system \( \mathbb{F} \).
As such, as is common in numerical analysis, we will use the assumption that,
\begin{align*}
%    \label{eqn:bounded}
    \abs{\err(x)} = \abs{ \rd(x) - x } \leq \epsilon \: E(x)
\end{align*}
for some fixed function \( E : \mathbb{R} \to \mathbb{R}_{\geq 0} \).
If \( E(x) = \abs{x} \) this bound is the standard bound for rounding to floating point number systems, and if \( E(x) = 1 \) this bound is the standard bound for fixed point systems; see for instance \cite{higham_02}.

Note that for a given set of numbers \( \mathbb{F} \) and non-negative function \( E \), the value of \( \epsilon \) required for \SR to satisfy \( \abs{ \err(x) }\leq \epsilon E(x) \) is roughly twice that of \RtN.
This is visible in \cref{fig:rounding_functions} and is at the core of the tradeoffs between the two approaches.

\subsection{Stochastic rounding}

When using stochastic rounding, \( \rd(X) \) is a random variable depending on both the randomness in \( X \) and in the rounding function \( \rd \).
We will assume that every time \( X \) is sampled, \( \rd \) is sampled according to its definition, \emph{independently of any past samples}; i.e. even if different samples of \( X \) take the same value, they might be rounded differently (although, in this paper we are concerned only with continuous random variables where the probability of two identical samples is zero).
%We use \( \EE \) to denote expectations taken over all randomness.
\iffalse
We can then use the relation
\begin{align*}
    \EE[ g( Z ) ]
    = \int \EE_{\rd}[ g(\rd(x)) ] f_X(x) \d{x},
\end{align*}
where \( f_X \) is the density of \( X \) and 
\begin{align*}
    \EE_{\rd}[ g(\rd(x)) ]
    &= g(\flx) \PP[\rd(x) = \flx]
    + g( \clx ) \PP[\rd(x) = \clx].
\end{align*}
%where \( p(x) = (x-\flx)/(\clx-\flx) \) is the probability that \( \rd(x) = \clx \).
When studying the moments of \( Z \), \( g(\rd(x)) \) will be of the form \( \err(x)^{k} \) or \( \abs{\err(x)}^k \), so the following lemma will prove useful.
\fi

We often require the expectation of \( \err(x)^k \) and \( \abs{\err(x)}^k \) taken over the randomness in the rounding function, so for convenience we give the following lemma.
\begin{lemma}
    \label{thm:stoch_err_ex}
    If \( \rd:\R\to\mathbb{F} \) is \SR, 
    \begin{align*}
        \EE_{\rd}\!\left[ \err(x)^k \right]
 %       &= (\flx - x )^k \left( 1- p(x) \right)
 %       + (\clx - x )^k p(x)
        &= (\flx - x )^k \left( 1- \frac{x-\flx}{\clx - \flx} \right)
        + (\clx - x )^k \left( \frac{x-\flx}{\clx - \flx} \right) 
        \\
        \EE_{\rd}\!\left[  \abs{ \err(x) } ^k \right]
 %       &=  ( x - \flx)^k \left( 1- p(x)\right)
 %       +  (\clx - x)^k \: p(x).
        &=  ( x - \flx)^k \left( 1- \frac{x-\flx}{\clx - \flx} \right)
        +  (\clx - x)^k \left( \frac{x-\flx}{\clx - \flx} \right).
    \end{align*}
\end{lemma}
In particular, note that \( \EE_{\rd}[\err(x)] = 0 \); i.e. the rounding scheme is unbiased.

\subsection{Basic bounds}
\label{sec:basic_bounds}

\begin{theorem}
    \label{thm:basic}
    Suppose \( \EE[|X|^{k}] < \infty \) for some integer \( k > 0 \) and that \( E : \mathbb{R} \to \mathbb{R}_{\geq 0} \) is such that for some \( D \geq0 \) and sufficiently large \( |x| \), \( |E(x)| \leq D |x|  \).
    Then, there exists a constant \( C > 0 \) such that, for all \( \epsilon \in(0,1) \) and \( (\mathbb{F},\rd) \) where \( \rd: \R\to\mathbb{F} \) corresponds to \RtN or \SR and satisfies \( \abs{\rd(x) - x} < \epsilon \: E(x) \),
    \begin{align*}
        \abs[\Big]{ \EE\!\big[X^{k}\big] - \EE\!\big[\rd(X)^{k}\big] } 
        \leq \EE\!\left[\abs[\big]{X^{k} - \rd(X)^{k} }\right] 
        < C \epsilon.
    \end{align*}
   
\end{theorem}

\begin{proof}
By assumption \( \epsilon\in(0,1) \) so \( \epsilon^j \leq \epsilon \).
Moreover, because \( E \) has at most linear growth at infinity and the \( k \)-th absolute moment of \( X \) exists, each of the expectations \(  \EE[ \abs{ E(x)^{j} X^{k-j} } ] \) are finite. 
Thus for any \( j=1,\ldots, k \),
\begin{align*}
    \EE\!\left[ \abs[\big]{ \err(X)^{j} X^{k-j} } \right]
    \leq \EE\!\left[ \abs[\big]{ E(x)^{j} X^{k-j} } \right] \cdot \epsilon^{j}
    \leq \EE\!\left[ \abs[\big]{ E(x)^{j} X^{k-j} } \right] \cdot \epsilon 
    < \infty
\end{align*}

\noindent
Using that \( \rd(X) = X+ \err(X) \) we expand
\begin{align*}
    \rd(X)^{k} 
    %= (X^{\alpha} + \err(X))^{\alpha}
    =  X^{k} + \sum_{j=1}^{k} \binom{k}{j} X^{k-j} \err(X)^{j}.
\end{align*}
Then, applying the triangle inequality,
\begin{align*}
    \abs[\Big]{ \EE\!\big[X^{k}\big] - \EE\!\big[\rd(X)^{k}\big] } 
    \leq \left[ \sum_{j=1}^{k} \binom{k}{j} \EE\!\left[ \abs[\big]{ X^{k-j} \err(X) } \right]  \right] 
    \leq \left[ \sum_{j=1}^{k} \binom{k}{j} \EE\!\left[ \abs[\big]{ E(x)^jX^{k-j} } \right]\right] \cdot \epsilon.   
    \tag*{\qedhere}
\end{align*}
\end{proof}

%A sufficient condition for the right hand side to be finite is that \( E \) has at most linear growth and absolute moment \( \EE[ |X_1|^{\alpha_1} \cdots |X_d|^{\alpha_d} ] \) is finite.
\begin{table}[b]\centering
    \setlength{\tabcolsep}{16pt}
    \begin{tabular}{rccc}
        \toprule
        rounding scheme & \( d_{\rd}(k) \) & \( e_{\rd}(k) \) & \( f_{\rd}(k) \) \\%& \( \beta_{\rd} \) \\
        \midrule
        \RtN & \( (k+1)^{-1} \) & \( (k+1)^{-1} \) & \( 2 (k+1)^{-1} \) \\
        \SR & \( \displaystyle (1 - (k+3) 2^{-(k+1)})(k^2+3k+2)^{-1} \) & \( \displaystyle 2(k^2+3k+2)^{-1} \) & \( \displaystyle 2(k^2+3k+2)^{-1} \)\\ 
        \bottomrule
    \end{tabular}
    \caption{
        Constants for different rounding schemes.
    }
    \label{table:constants}
\end{table}

\section{Higher order moment bounds}

If \( \rd \) corresponds to rounding with either \RtN or \SR, we expect cancellation in many cases. 
This is illustrated in \cref{fig:error_odd}, which depicts an error function corresponding to the \RtN scheme.
The key observation is that the integral of the error function of any finite interval is \emph{much} smaller than the integral of the corresponding bound.

\begin{figure}[htb]\centering
    \begin{tikzpicture}

\def\x{8}
\def\e{1/4}
\def\y{\e*\x}
\def\ee{(1+\e)/(1-\e)}

\def\hh{0.5}
%\useasboundingbox (-2*\hh,-\y-.05) rectangle (\x+.6,\y+.05);

% define and b
\def\a{2.15}
\def\b{{5.5/(1-\e)}}

% draw shaded region for condition 1
%\path[fill=blue!10] (0,0) -- (\x,\y) -- (\x,-\y) -- cycle;
%\path[fill=blue!10] (1.5,1.5*\e) -- (\b,{(\b)*\e}) -- (\b,{(\b)*-\e}) -- (1.5,1.5*-\e) -- cycle;
\path[fill=Base02!20] (\a,\a*\e) -- (\b,{(\b)*\e}) -- (\b,0) -- (\a,0) -- cycle;
\draw[line width=1,Base02!50,cap=round] (\x,-\y) -- (0,0) -- (\x,\y);
\draw[line width=1,Base02!50,cap=round] (-2*\hh,{2*\hh*\e}) -- (0,0) -- (-2*\hh,{-2*\hh*\e});

%\node[circle,fill=black,inner sep=0,minimum size=4pt] at (1.5,0) {\(a\)};

% define error drawing function
\newcommand\drawError[2]{
%    \node[circle,fill=black,inner sep=0,minimum size=2pt] at ({#2},0) {};
    \begin{scope}
        \clip (\a,{(\a)*\e}) -- (\b,{(\b)*\e}) -- (\b,{(\b)*-\e}) -- (\a,{(\a)*-\e}) -- cycle;
        \path[fill=Base02!60] ({#1},0) -- ({(#1+#2)/2},{(#1-(#2))/2}) -- ({(#1+#2)/2},{-(#1-(#2))/2}) -- ({#2},0);
    \end{scope}
    
    \draw[dotted,cap=round] ({(#1+#2)/2},{-(#1-(#2))/2}) -- ({(#1+#2)/2},{(#1-(#2))/2});
    \draw[line width=1,Base02,cap=round] ({#1},0) -- ({(#1+#2)/2},{(#1-(#2))/2});
    \draw[line width=1,Base02,cap=round] ({(#1+#2)/2},{-(#1-(#2))/2}) -- ({min(#2,\x)},{0+(\x<#2)*(-(#1-(#2))/2 - (\x - (#1+#2)/2) )});
}

% draw e(x)
%\foreach \xx[remember=\xx as \lastx] in {1.5/(\ee)^2,1.5/(\ee),1.5,1.5*\ee,2*1.5*\ee-1.5,(2*1.5*\ee-1.5)*\ee,(2*1.5*\ee-1.5)*(\ee)^2} {
\def\lastx{1.5/(\ee)^3}
\foreach \xx[remember=\xx as \lastx] in {1.5/(\ee)^2,1.5/(\ee),1.5,\a,\a*\ee,4.25,5.5,5.5*\ee} {
    \drawError{\lastx}{\xx}
}

% left side
\def\lastx{-2*\hh/(\ee)^2}
\foreach \xx[remember=\xx as \lastx] in {-2*\hh/(\ee),-2*\hh} {
    \drawError{\lastx}{\xx}
}

% draw a and b
\def\h{.05}
\draw[line width=1,cap=round] (\a,-\h) -- (\a,\h);% node [above=.05,inner sep=0.6] {\(a\)};
\draw[line width=1,cap=round] (\b,-\h) -- (\b,\h);% node [above=.05,inner sep=0.6] {\(b\)};
\node[above] at ({(\b)+.12},\h-.1) {\(b\)};
    \node[above] at ({(\a)+.12},\h-.1) {\(a\)};

% draw axes 
\draw[<->] (0,-\y) -- (0,\y) node [left] {\(y\)};
\draw[<->] (-3*\hh,0) -- (\x+\hh,0) node [below] {\(x\)};

\iffalse
% legend
\draw[line width=1,Base02!50,cap=round] (.35,{\y*(3/4)}) -- (.6,{\y*(3/4)}) node[right,black] {\( \pm \epsilon \: E(x) \)};
\draw[line width=1,Base02,cap=round] (.35,{\y*(2/4)}) -- (.6,{\y*(2/4)}) node[right,black] {\( \operatorname{err}(x) \)};
\fi
\end{tikzpicture}
\iffalse
    \Description[]{
        Plot showing the functions epsilon times the absolute value of x and err of x.
        Integral between two points a and b for each function is shaded.
    }
\fi
    \caption{%
        The contribution of the integral of the \RtN error function over the interval \( [a,b] \) is at most the area of the rightmost darkly shaded triangle: \( [E(b)^2/2] \cdot \epsilon^2 \).
        This is in contrast to the lightly shaded area which is of size \( [\int_{a}^{b} E(x) \d{x}] \cdot \epsilon \).
        \emph{Legend}: 
        \( \err(x) \) ({\protect\raisebox{-.2mm}{\protect\includegraphics[scale=1]{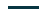}}}),
        \( E(x) \cdot \epsilon \) ({\protect\raisebox{-.2mm}{\protect\includegraphics[scale=1]{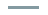}}}).
    }
    \label{fig:error_odd}
\end{figure}

\begin{lemma}
\label{thm:err_endpoint}
    Let \( c \in \mathbb{R} \) and \( c' \in \{ \flc,\clc \} \) so that \( |c-c'| \) is minimal.
    Then, for any odd integer \( k > 0 \), with \( d_{\rd}(k) \) as in \cref{table:constants},
    \begin{enumerate}[label=(\roman*),resume]
    \item 
    if \( \rd:\R\to\mathbb{F} \) is \RtN, 
    \begin{align*}
        \left| \int_{c}^{c'} \err(x)^k \d{x} \right|  
        \leq \left[ d_{\rd}(k) E(c)^{k+1} \right] \cdot \epsilon^{k+1}.
    \end{align*}
    \item 
        if \( \rd:\R\to\mathbb{F} \) is \SR, the above holds with \( \err(x)^k \) replaced by \( \EE_{\rd}[\err(x)^k] \).
    \end{enumerate}
%We note that \( c \) may be larger than \( \rd(c) \), but use the notation \( [c,\rd(c)] \) to indicate the direction of the integral.
\end{lemma}

\begin{proof}
    For \RtN \( c' = \rd(c) \). Note that \( \err(x) = \rd(c) - x \) does not change signs on \( [\min(c,c'),\max(c,c')] \).
    Thus, using the assumption \(  \abs{c-c'} \leq \epsilon \: E(c)  \),
    \begin{align*}
        \abs[\Bigg]{ \int_{c}^{c'} (\rd(c)-x)^k \d{x} }
        = \int_{\min(c,c')}^{\max(c,c')} |c'-x|^k \d{x} 
        = \frac{|c'-c|^{k+1}}{k+1}
        \leq \left[ \frac{1}{k+1} E(c)^{k+1 }\right] \cdot \epsilon^{k+1}.
    \end{align*}
    
    \noindent
    For \SR, \( \EE_{\rd}[\err(x)^k ] \) also does not change signs on \( [\min(c,c'),\max(c,c')] \).
    Moreover, it is symmetric about \( \bar{c} = (\flc+\clc)/2 \) on \( [\flc,\clc] \).
    Using these facts and that for \SR \( \clc-\flc \leq \epsilon \: E(c) \),
    \begin{align*}
        \abs[\Bigg]{ \int_{c}^{c'} \EE[\err(x)^k] \d{x} }
        \leq \int_{\flc}^{\bar{c}} \abs[\big]{\! \EE[\err(x)^k] } \d{x} 
        = \frac{1-(k+3)2^{-(k+1)}}{k^2+3k+2} (\clc-\flc)^{k+1}
        \leq \left[ \frac{1-(k+3)2^{-(k+1)}}{k^2+3k+2} E(c)^{k+1} \right] \cdot \epsilon^{k+1}.
        \tag*{\qedhere}
    \end{align*}
\end{proof}

From \cref{fig:error_odd}, it is clear that the contribution to integrals of \( \err(x) \) are due to the endpoints.
This is stated precisely in the following result, which is essentially a corollary of \cref{thm:err_endpoint}.
\begin{lemma}
    \label{thm:err_int_F}
    Let \( a, b \in \mathbb{R} \) with \( a<b \).
    Then, for any odd integer \( k > 0 \), with \( d_{\rd}(k) \) as in \cref{table:constants},
    \begin{enumerate}[label=(\roman*),resume]
    \item 
    if \( \rd:\R\to\mathbb{F} \) is \RtN, 
    \begin{align*}
        \abs[\Bigg]{ \int_{a}^{b} \err(x)^k \d{x} }
        \leq \left[ d_{\rd}(k)\max\left\{ E(a)^{k+1}, E(b)^{k+1} \right\} \right] \cdot \epsilon^{k+1}.
    \end{align*}
    \item 
        if \( \rd:\R\to\mathbb{F} \) is \SR, the above holds with \( \err(x)^k \) replaced by \( \EE_{\rd}[\err(x)^k] \).
    \end{enumerate}
\end{lemma}

\begin{proof}%[of \cref{thm:err_int_F}]
We prove the \RtN case, but the same proof, with \( \err(x)^k \) replaced by \( \EE_{\rd}[\err(x)^k] \) holds for \SR.
Let \( a' \in \{ \lfloor a \rfloor, \lceil a \rceil\} \) and \( b' \in \{ \lfloor b \rfloor, \lceil b \rceil\} \) so that \( |a-a'| \) and \( |b-b'| \) are minimal.
    If \( k \) is odd then for any \( c \), by symmetry, the integral of \( \err(x)^k \) over \( [\flc,\clc] \) is zero.
Thus, inductively,
\begin{align*}
    \int_{a'}^{b'}\err(x)^k\d{x} = 0.
\end{align*}
Applying \cref{thm:err_endpoint} to the endpoints we have
\begin{align*}
    \abs[\Bigg]{ \int_{a}^{b} \err(x)^k \d{x} } 
    &= \abs[\Bigg]{ \int_{a'}^{b'} \err(x)^k \d{x} 
    + \int_{a}^{a'} \err(x)^k \d{x}
    + \int_{b'}^{b} \err(x)^k \d{x} }
    \\&\hspace{8.3em}= \abs[\Bigg]{ 
    \int_{a}^{a'} \err(x)^k \d{x}
    - \int_{b}^{b'} \err(x)^k \d{x} }
    \leq \left[ d_{\rd}(k)  \max \left\{ E(a)^{k+1} ,E(b)^{k+1} \right\} \right]\cdot \epsilon^{k+1}. 
    \tag*{\qedhere}
\end{align*}
\end{proof}

Next, we argue that this same higher order bound carries over to integrals of nice functions against \( \err(x)^k \).
\iffalse
if \( \int_{a}^{b} \err(x) \d{x} \) is small, then integrals of the form
\begin{align*}
    \int_{a}^{b} f(x) \err(x) \d{x}
\end{align*}
should also be small, provided \( f \) does not have too many local maxima.
\fi
First, however, we recall a basic property the Lebesgue--Stieltjes integral:
If \( g \) is absolutely integrable with respect to a measure \( \nu \), then,
\begin{align*}
    \mu(A) := \int_A g\d{\nu}
\end{align*}
is a (signed) measure and,
\begin{align*}
    \int_A fg \d{\nu}
    = \int_A f \d{\mu} .
\end{align*}

%    Of course, this still runs into the issue of needing to know \( g \) in order to compute \( \int_A g \d{\nu} \).
%    However, if we are able to bound the integral of \( g \), then we can bound the integral of \( fg \).

Let \( \mathcal{O} \subset 2^\mathbb{R} \) denote the set of all open subsets of \( \mathbb{R} \).
Recall that any open set \( A \in \mathcal{O} \setminus \{ \emptyset \} \) can be written \( A = \bigcup_{i=1}^{K} (a_i,b_i) \) where \( (a_i,b_i) \) are pairwise disjoint and \( K \in \mathbb{Z}_{>0} \cup \{\infty\} \).
Using this notation, we have the following lemma:
\begin{lemma}
\label{thm:prod_bound}
Let \( f: \mathbb{R} \to \mathbb{R}_{\geq 0} \) be lower semi-continuous and \( g:\mathbb{R} \to \mathbb{R} \) integrable. 
Suppose that \( fg \) is absolutely integrable and that there exists a function \( G : \mathbb{R} \times \mathbb{R} \to \mathbb{R}_{\geq 0} \) such that for any \( a,b\in\mathbb{R} \),
\begin{align*}
    \int_{a}^{b} g(x) \d{x}
    \leq G(a,b)
\end{align*}
Extend \( G : \mathbb{R} \times \mathbb{R} \to \mathbb{R}_{\geq 0} \) to a function \( \mu : \mathcal{O} \to \mathbb{R}_{\geq 0} \cup \{ \infty \} \) on open sets by \( \mu(\emptyset) = 0 \) and,
\begin{align*}
    \mu(A)
    = \mu\left( \bigcup_{i=1}^{K} (a_i,b_i) \right)
    = \sum_{i=1}^{K} G(a_i,b_i)
    ,&& \forall A \in \mathcal{O} \setminus \{ \emptyset \} 
\end{align*}
Then, 
\begin{align*}
    \int_{\mathbb{R}} f(x) g(x) \d{x} 
    \leq \int_{0}^{\infty} \mu ( \{ x : f(x) > u\} ) \d{u}.
\end{align*}
\end{lemma}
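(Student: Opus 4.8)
The plan is to prove the bound via a layer-cake (Cavalieri) decomposition of the nonnegative function $f$. Since $f : \mathbb{R} \to \mathbb{R}_{\geq 0}$, we have the standard representation
\begin{align*}
    f(x) = \int_{0}^{\infty} \bOne[ f(x) > u ] \d{u},
\end{align*}
so that, after substituting into $\int_{\mathbb{R}} f(x) g(x) \d{x}$ and exchanging the order of integration (justified by the absolute integrability of $fg$ together with Tonelli/Fubini applied to the positive and negative parts of $g$), we obtain
\begin{align*}
    \int_{\mathbb{R}} f(x) g(x) \d{x}
    = \int_{0}^{\infty} \left( \int_{\{ x : f(x) > u \}} g(x) \d{x} \right) \d{u}.
\end{align*}
The outer variable $u$ now only sees the superlevel set $A_u := \{ x : f(x) > u \}$, and the inner integral is $\int_{A_u} g$. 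The goal is then to show $\int_{A_u} g \leq \mu(A_u)$ for each $u$, after which integrating in $u$ gives the claim.

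The role of lower semi-continuity of $f$ is exactly to guarantee that each superlevel set $A_u$ is open, so that the extension $\mu$ is actually defined on $A_u$. First I would invoke this: for lower semi-continuous $f$, the set $\{ x : f(x) > u \}$ is open for every $u$, hence admits the decomposition $A_u = \bigcup_{i=1}^{k} (a_i, b_i)$ into pairwise disjoint open intervals stated in the lemma. Then, using countable additivity of the integral over the disjoint pieces and the hypothesis $\int_{a_i}^{b_i} g \leq G(a_i, b_i)$ applied interval by interval, I would bound
\begin{align*}
    \int_{A_u} g(x) \d{x}
    = \sum_{i=1}^{k} \int_{a_i}^{b_i} g(x) \d{x}
    \leq \sum_{i=1}^{k} G(a_i, b_i)
    = \mu(A_u).
\end{align*}
Combining this pointwise (in $u$) bound with the layer-cake identity above yields the desired inequality.

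The main obstacle I expect is rigor in two places rather than in the overall strategy. First, the interchange of the $u$-integral and the $x$-integral is not immediately Tonelli, because $g$ is signed; I would handle this by splitting $g = g_+ - g_-$, applying Tonelli to each nonnegative piece, and using absolute integrability of $fg$ to recombine without an indeterminate $\infty - \infty$. Second, the inequality $\sum_i \int_{a_i}^{b_i} g \leq \sum_i G(a_i,b_i)$ requires care when $k = \infty$ and when some intervals are unbounded (i.e., $a_i = -\infty$ or $b_i = +\infty$): I would need to interpret $\int_{a_i}^{b_i} g \leq G(a_i,b_i)$ and the sum as limits, checking that the hypothesis on $G$ is assumed to cover these extended-real endpoints or that the bound passes to the limit by monotone/dominated convergence. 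A subtle point worth flagging is that $\int_{a_i}^{b_i} g \leq G(a_i,b_i)$ is a one-sided (upper) bound on a signed integral, so the inequality only runs in one direction — which is all the lemma claims, and which is precisely why we only get an upper bound on $\int_{\mathbb{R}} fg$ and not an equality.

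One last check I would include is the degenerate case where $A_u = \emptyset$, for which both sides vanish by the convention $\mu(\emptyset) = 0$, and the case where $\mu(A_u) = \infty$ for a set of $u$ of positive measure, in which the right-hand side is $+\infty$ and the inequality holds trivially. With these boundary cases dispatched and the two rigor points above resolved, the argument closes.
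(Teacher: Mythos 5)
Your proposal is correct and takes essentially the same route as the paper's proof: both rest on the layer-cake decomposition of \( f \) over its superlevel sets \( \{x : f(x) > u\} \) (open by lower semi-continuity), bound \( \int_{A_u} g \) interval-by-interval by \( \mu(A_u) \), and then integrate in \( u \). The only difference is presentational — you spell out the Fubini/Tonelli interchange and the splitting of \( g \) into positive and negative parts, which the paper compresses into defining the signed measure \( \nu(A) := \int_A g \,\mathrm{d}x \) and invoking ``the definition of the Lebesgue--Stieltjes integral.''
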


\begin{proof}
Define \( \nu(A) := \int_{A} g(x) \d{x} \) and observe that by definition \( \nu(A) \leq \mu(A) \) for all \( A  \in \mathcal{O} \).
Then, by definition of Lebesgue--Stieltjes integral,
\begin{align*}
    \int_{\mathbb{R}} f(x) g(x) \d{x} 
    %= \int f(x) \d{\nu}
    &= \int_{0}^{\infty} \nu(\{ x : f(x) > u\}) \d{u}
    \leq \int_{0}^{\infty} \mu(\{ x : f(x) > u\}) \d{u}.
%        \\[-\normalbaselineskip]\tag*{\qedhere}
    \tag*{\qedhere}
\end{align*}
\end{proof}

This lemma allows us to provide lower bounds on the integral of \( fg \) given lower bounds on the integral of \( g \) over \( [a,b] \) (simply replace \( g \) with \( -g \) in the above statement).
This bound can therefore be naturally extended to apply to any function which have negative outputs by decomposing \( f  = f^+ + f^- \) where \( f^+, -f^- \geq 0 \) provided both \( f^+ \) and \( -f^- \) are lower semi-continuous.
Moreover, if \( G(a,b) \) is of the form \( G(a,b) = \int_a^b h(x) \d{x} \), then
\begin{align*}
    \int_{0}^{\infty} \mu(\{x : f(x) > u \} \d{u}
    = \int_{\mathbb{R}} f(x) h(x) \d{x}.
\end{align*}

How tight \cref{thm:prod_bound} is depends on how tight the bound for the integral of \( g \) is.
For instance, if the bound on \( g \) is equality then the proposition's bound is equality; in fact it is simply the Lebesgue--Stieltjes integral \( \int f(x) \d G(x) \), where \( G \) is an antiderivative of \( g \).
On the other hand, as in the case of our subsequent applications of this proposition, if the bound on \( g \) does not take into account some behavior of \( g \), then the bound will be more pessimistic as it must account for the worst case interaction between \( f \) and \( g \).

To facilitate the use of \cref{thm:prod_bound}, we introduce the following definition and Lemma.
\begin{definition}
A function \( f : \R \to \R \) is said to have \( K \) regions of local maxima if, for all \( t \in \R \), 
\begin{align*}
    \{ x : f(x) > t \} = \bigcup_{i=1}^{K} (a_i, b_i)
\end{align*}
where \( (a_i, b_i) \) are pairwise disjoint.
\end{definition}

\begin{lemma}
\label{thm:unimodal}
    Suppose \( f : (a,b) \to \mathbb{R}_{\geq 0} \) is bounded and has single region of local maxima, and that \( E(x) = mx+c \) for \( m \geq 0 \).
Let \( x^* \in \mathbb{R} \) be the largest point such that \( f \) is non-decreasing on \( (a,x^*) \) and non-increasing on \( (x^*,b) \).
Then, if \( k \) is odd,
\begin{enumerate}[label=(\roman*),nolistsep]
    \item If \( \rd:\R\to\mathbb{F} \) is \RtN,
        \begin{align*}
            &\left| \int f(x) \err(x)^{k} \d{x} \right|
%            \leq d_{\rd} \int_{0}^{x^*} (a\hat{f}^{-1}(u)+b)^{k+1} \d{u}
            \leq \left[ m (k+1) d_{\rd}(k) \int_{x^*}^{b} ( mx+c)^k f(x) \d{x} \right] \cdot \epsilon^{k+1}.
        \end{align*}
    \item if \( \rd:\R\to\mathbb{F} \) is \SR, the above holds with \( \err(x)^k \) replaced by \( \EE_{\rd}[\err(x)^k] \).
\end{enumerate}
\end{lemma}

\begin{proof}%[Proof of \cref{thm:unimodal}]
We prove the \RtN case, but the same proof, with \( \err(x)^k \) replaced by \( \EE_{\rd}[\err(x)^k] \), holds for \SR.
    First, we make several notational definitions.
    Define \( \hat{f}:(x^*,b) \to (0,f(x^*)) \) as the restriction of \( f \) to \( (x^*,b) \). 
    That is, for all \( x\in(x^*,b) \), \( \hat{f}(x) = f(x) \).
    Next, define, \( \hat{f}^{-1}: (0,f(x^*)) \to (x^*,b) \) by
    \begin{align*}
        \hat{f}^{-1}(u) := \sup\{ x : f(x) > u \} .
    \end{align*}
    Define also
    \begin{align*}
        G(a',b') :=  \left[ d_{\rd}(k) \max\{E(a')^{k+1}, E(b')^{k+1}\} \right] \epsilon^{k+1}
        =  \left[ d_{\rd}(k) E(b')^{k+1} \right] \epsilon^{k+1}.
        %\abs[\Bigg]{ \int_{a'}^{b'} \err(x)^k \d{x} }
        %\leq \left[ d_{\rd}(k) \max\{E(a')^{k+1}, E(b')^{k+1}\} \right] \cdot \epsilon^{k+1} =: \epsilon^{k+1} G(a',b').
    \end{align*}

\noindent
By assumption, \( f \) has a single local maxima (or connected region of local maxima) so
\begin{align*}
    \{ x : f(x) > u \}
    = \left( \inf \{ x : f(x) > u \}, \sup \{ x : f(x) > u \} \right).
\end{align*}
Then, in the notation of \cref{thm:prod_bound} with \( g(x) = \err(x)^k \), for \( u < f(x^*) \),
\begin{align*}
    \mu( \{ x : f(x) > u \} )
    &= G\left( \inf \{ x : f(x) > u \}, \sup \{ x : f(x) > u \} \right) 
    \\&\leq d_{\rd}(k) E(\sup\{ x : f(x) > u \})^{k+1}
    \\&= d_{\rd}(k) E(\hat{f}^{-1}(u))^{k+1}
\end{align*}
where the inequality follows from the fact \( E \) is non-decreasing.
Therefore, applying \cref{thm:prod_bound} and \cref{thm:err_int_F},
\begin{align*}
    \abs[\bigg]{ \int_a^b f(x) \err(x)^k \d{x} }
    &\leq \abs[\bigg]{ \int_0^\infty \mu( \{ x : f(x) > u \} ) \d{u} \: } 
    \leq \left[ d_{\rd}(k) \abs[\bigg]{ \int_0^{f(x^*)} E(\hat{f}^{-1}(u))^{k+1} \d{u} \: } \right] \cdot \epsilon^{k+1}.
\end{align*}
Since \( E \) and \( \hat{f}^{-1} \) are non-decreasing, \( u\mapsto E(\hat{f}^{-1}(u))^{k+1} \) is also non-decreasing.
    Thus, reversing the axis of integration and making a change of variables \( v = E(x)^{k+1} = (mx+c)^{k+1} \),
\begin{align*}
    \int_0^{f(x^*)} E(\hat{f}^{-1}(u))^{k+1} \d{u}
    &= \int_{E(x^*)^{k+1}}^{E(b)^{k+1}} \hat{f} \left( \frac{v^{1/(k+1)}-c}{m} \right) \d{v}
    = (k+1) m\int_{x^*}^{b} ( mx+c)^k \:\hat{f} (x) \d{x}.
    \tag*{\qedhere}
\end{align*}
\end{proof}

%Given \cref{thm:unimodal}, which is proved in \cref{sec:proofs}, our main result follows easily.
\begin{theorem}
    Suppose \( \EE[|X|^{k}] < \infty \) for some integer \( k > 0 \), that \( x\mapsto x^{\alpha-1} f_X(x) \) has finitely many regions of local maxima, and that \( E : \mathbb{R} \to \mathbb{R}_{\geq 0} \) is piecewise linear with a finite number of breakpoints.

    Then there exists a constant \( C > 0 \) such that, for all \( \epsilon \in(0,1) \) and \( (\mathbb{F},\rd) \) where \( \rd: \R\to\mathbb{F} \) corresponds to \RtN or \SR and satisfies \( \abs{\rd(x) - x} < \epsilon \: E(x) \),
    \begin{align*}
       \abs[\Big]{ \EE\!\big[X^{k}\big] - \EE\!\big[\rd(X)^{k}\big] } < C \epsilon^2.
    \end{align*}
   
\end{theorem}

\begin{proof}
As in the proof of \cref{thm:basic} we expand
\begin{align*}
    \EE\!\left[ \rd(X)^{k} \right]
    %= (X^{\alpha} + \err(X))^{\alpha}
    = \EE\!\left[X^{k}\right] + \sum_{j=1}^{k} \binom{k}{j} \EE\!\left[ X^{k-j} \err(X)^{j} \right]
\end{align*}
and note that each term in the sum is of size \( O(\epsilon^j) \).
It suffices to show that the \( j=1 \) term is actually \( O(\epsilon^2) \).

\noindent
Let \( K_1 \) be the number of regions of local maxima and \( K_2 \) the number of breakpoints in \( E \).
    Then we can partition \( (-\infty,\infty) \) into \( K \leq K_1+K_2 \) intervals \( (a_i, a_{i+1}) \) such that on each \( (a_i, a_{i+1}) \), \( x\mapsto x^{\alpha-1} f_X(x) \) has a single region of local maxima and \( E \) is piecewise linear.
Then, applying \cref{thm:unimodal} to each interval we see that for some constant \( C_1 > 0 \)
\begin{align*}
    \left| \int \err(x) x^{k-1} f_X(x) \d{x} \right|
    &\leq \sum_{i=1}^{K} \left| \int_{a_i}^{a_{i+1}} \err(x) x^{k-1} f_X(x) \d{x} \right|
    \leq C_1 \epsilon^2.
\end{align*}
    The result then follows by adding \( C_1 \) to the coefficients for the bounds for the \( j > 1 \) terms, similar to as in the proof of \cref{thm:basic}.
\end{proof}

\section{Refined absolute moment bounds}

Trivially, we may bound the integral of \( \abs{ \err(x) }^k \) by the integral of \( \epsilon^k \: E(x)^k \).
However, as suggested by \cref{fig:error_even}, if \( \rd \) corresponds to \RtN or \SR, integrals against \( \abs{ \err(x) }^k \) should be a constant fraction smaller than integrals against \( \epsilon^k \:E(x)^k \) due to the fact that the rounding function cannot always attain the size of the worst case error.
As in the proof of \cref{thm:err_int_F}, we bound the bulk of the contribution between two numbers in \( \mathbb{F} \), and then account for tiny contributions at the endpoints.

\begin{lemma}
\label{thm:err_int_abs_fp}
    Suppose \( E(x) = mx+b \) on \( [\flc,\clc] \).
    Then for any integer \( k > 0 \), with \( e_{\rd}(k) \) as in \cref{table:constants},
    \begin{enumerate}[label=(\roman*),resume]
    \item
    if \( \rd:\R\to\mathbb{F} \) is \RtN,
    \begin{align*}
        \int_{\flc}^{\clc} \abs{\err(x)}^k \d{x} 
        \leq \left[ e_{\rd}(k) \int_{\flc}^{\clc} E(x)^k \d{x} \right] \cdot \epsilon^k .
    \end{align*}
    \item 
        if \( \rd:\R\to\mathbb{F} \) is \SR, the above holds with \( \abs{ \err(x) }^k \) replaced by \( \EE_{\rd}[\abs{\err(x)}^k] \).
    \end{enumerate}
\end{lemma}

\begin{proof}%[Proof of \cref{thm:err_int_abs_fp}]
    We first prove the \RtN case.
    By direct computation, using that \( \err((\flc+\clc)/2) \leq \epsilon \: E((\flc+\clc)/2) \) followed by the fact that the tangent to \( E(x)^k \) at \( (\flc+\clc)/2 \) lies entirely below \( E(x) \),
    \begin{align*}
        \int_{\flc}^{\clc} \abs{\err(x)}^k \d{x}
        &= \frac{1}{k+1}\int_{\flc}^{\clc} \abs[\bigg]{ \err \left( \frac{\flc+\clc}{2} \right)  }^k \d{x}
        \leq \left[ \frac{1}{k+1} \int_{\flc}^{\clc} E \left( \frac{\flc+\clc}{2} \right)^k \d{x} \right] \cdot \epsilon^k
        \leq \left[ \frac{1}{k+1} \int_{\flc}^{\clc} E(x)^k \d{x} \right] \cdot \epsilon^k.
    \end{align*}
    
    \noindent
    For the \SR case, again by direct computation, followed by the fact that \( (\flc-\clc) \leq E(x) \), we find
    \begin{align*}
        \int_{\flc}^{\clc} \EE_{\rd}\!\big[\abs{\err(x)}^k\big] \d{x}
        = \frac{2}{k^2+3k+2} \int_{\flc}^{\clc} \left( \clc-\flc \right)^{k} \d{x}
        \leq \left[ \frac{2}{k^2+3k+2} \int_{\flc}^{\clc} E(x)^k \d{x} \right] \cdot \epsilon^k
   \tag*{\qedhere}
    \end{align*}
\end{proof}

\begin{figure}[ht]\centering
    \begin{tikzpicture}
\def\x{10}
\def\y{\e*\x}

\def\a{11}
\def\b{14}
\def\e{(\b-\a)/(\a+\b)}
\def\ee{(1+\e)/(1-\e)}

\def\hh{.65}

% x^2 area
\path[fill=Base02!20,domain={\a}:{\b},smooth,variable=\x] 
    (\a,0) 
    -- plot ({\x},{(\x*\e)^2+\hh}) 
    -- (\b,0) 
    -- cycle;

% e(x)^2 area
\path[fill=Base02!60,domain=0:1,smooth,variable=\x] 
    plot ({\a*(1-\x)+\x*(\a+\b)/2},{(\a*(1-\x)+\x*(\a+\b)/2-\a)^2}) 
    -- plot ({(\a+\b)/2*(1-\x)+\x*\b},{((\a+\b)/2*(1-\x)+\x*\b-\b)^2}) 
    -- cycle;

% plot (b-a)^k/2^k
\draw[line width=1,black!40,cap=round,TwoMarks={1.5*\hh cm}] (\a-3.5*\hh,{(\b-\a)/2)^2}) -- (\b+\hh,{(\b-\a)/2)^2}); 

% plot eps (a+b)^k/2^k
\draw[line width=1,black!20,cap=round,TwoMarks={1.5*\hh cm}] (\a-3.5*\hh,{(\b-\a)/2)^2+\hh}) -- (\b+\hh,{(\b-\a)/2)^2+\hh}); 

% plot e(x)^k
\draw[Base02,line width=1,domain=0:1,smooth,variable=\x,cap=round]
    plot ({(\a-\hh)*(1-\x)+\x*(\a+\b)/2},{((\a-\hh)*(1-\x)+\x*(\a+\b)/2-\a)^2})
    plot ({(\a+\b)/2*(1-\x)+\x*(\b+\hh)},{((\a+\b)/2*(1-\x)+\x*(\b+\hh)-\b)^2});

% plot eps^k|x|^k
\draw[Base02!50,line width=1,domain={\a-\hh}:{\b+\hh},smooth,variable=\x,cap=round] plot ({\x},{(\x*\e)^2+\hh});
%\draw[Base02!50,line width=1,domain={\a-\hh}:{\b+\hh},smooth,variable=\x,cap=round] plot ({\x},{2*\e*\e*((\a+\b)/2)*(\x - (\a+\b)/2) + ((\b-\a)/2)^2+\hh});

% draw axes

\draw[<->] (\a-3*\hh,{-\hh/2}) -- (\a-3*\hh,{((\b+\hh)*\e)^2}) node[left] {\(y\)};
\draw[<->,TwoMarks={2*\hh cm}] (\a-4*\hh,0) -- (\b+2*\hh,0) node[below] {\(x\)};

% draw a and b
\def\h{.05}
\draw[line width=1,cap=round] (\a,-\h) -- (\a,\h);% node [above=.05,inner sep=0.6] {\(a\)};
\draw[line width=1,cap=round] (\b,-\h) -- (\b,\h);
    \node[below] at (\b,\h-.06) {\(\lceil c \rceil\)};
    \node[below] at (\a,\h-.06) {\(\lfloor c \rfloor\)};

\draw[dotted,cap=round] (\a,0) -- (\a,{((\b-\a)/2)^2+1}); 
\draw[dotted,cap=round] (\b,0) -- (\b,{((\b-\a)/2)^2+1}); 

\draw[dotted,domain={\a-\hh}:{\b+\hh},smooth,variable=\x,cap=round] plot ({\x},{2*\e*\e*((\a+\b)/2)*(\x - (\a+\b)/2) + ((\b-\a)/2)^2+\hh});

% legend
\iffalse
    \begin{scope}[shift={({\b+2*\hh},{((\b-\a)/2)^2-2*\hh})}]
        \draw[line width=1,Base02,cap=round] (0,0) -- (.25,0) node[right,black] {\(|\mathrm{err}(x)|^n \)};
        \draw[line width=1,cap=round,black!40] (0,{\hh}) -- (.25,{\hh}) node[right,black] {\( \left( \frac{\lceil c \rceil-\lfloor c \rfloor}{2} \right)^k \)};
        \draw[line width=1,cap=round,black!20] (0,{2*\hh}) -- (.25,{2*\hh}) node[right,black] {\( \left( \frac{\lfloor c \rfloor+\lceil c \rceil}{2} \right)^k \epsilon^n \)};
    \draw[line width=1,cap=round,Base02!50] (0,{3*\hh}) -- (.25,{3*\hh}) node[right,black] {\( \epsilon^k E(x)^k \)};
\end{scope}
\fi
\end{tikzpicture}

    \iffalse
    \Description[Visualization of proof]{
        Image shows the various functions used in the corresponding proof between two consecutive floating point numbers.
    }
    \fi
    \caption{%
        The contribution of the integral of the \( k \)-th power of absolute error function for \RtN over the interval \( [ \floor{c},  \ceil{c} ] \) is at most \( 1/(k+1) \) the area of the integral of the constant function \( (\ceil{c} - \floor{c})^k/2^k \) over this interval, which is itself smaller than the integral of \( \epsilon^k \abs{x}^k \) over this interval.
        Note that \( E(x) \) is always larger than the tangent at \( (\clc+\flc)/2 \) and that the area under this tangent is equal to that of the constant function through the tangent point.
        \emph{Legend}: 
        \( E(x)^k \cdot \epsilon^k \) ({\protect\raisebox{-.2mm}{\protect\includegraphics[scale=1]{blue1_solid.pdf}}}),
        \( \left( \frac{\flc+\clc}{2} \right)^k \cdot \epsilon^k \) ({\protect\raisebox{-.2mm}{\protect\includegraphics[scale=1]{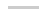}}}),
        \( \left( \frac{\flc-\clc}{2} \right)^k \) ({\protect\raisebox{-.2mm}{\protect\includegraphics[scale=1]{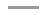}}}),
        \( \abs{ \err(x) }^k \) ({\protect\raisebox{-.2mm}{\protect\includegraphics[scale=1]{blue0_solid.pdf}}}).
    }
    \label{fig:error_even}
\end{figure}

\begin{lemma}
\label{thm:err_int_abs_fp_end}
    Suppose \( E(x) = mx+b \) on \( [\flc, \clc] \).
    Then for any integer \( k > 0 \), with \( f_{\rd}(k) \) as in \cref{table:constants},
    \begin{enumerate}[label=(\roman*),resume]
    \item 
    if \( \rd:\R\to\mathbb{F} \) is \RtN, for \( \beta = 1/(1-m\epsilon) \),
    \begin{align*}
        \int_{\flc}^{\clc} \abs{\err(x)}^k \d{x} 
        \leq \left[ f_{\rd}(k) (\beta E(c))^{k+1} \right] \cdot \epsilon^{k+1}.
    \end{align*}
    \item 
    if \( \rd:\R\to\mathbb{F} \) is \SR, the above holds with \( \abs{ \err(x) }^k \) replaced by \( \EE_{\rd}[\abs{ \err(x) }^k] \) and \( \beta=1 \).
    \end{enumerate}
\end{lemma}

\begin{proof}
   
    We first prove the \RtN case.
    Using that \( E(x) = mx+b \) we have
    \begin{align*}
        \frac{\clc-\flc}{2} 
        = \abs[\bigg]{ \err \left( \frac{\flc+\clc}{2} \right) } 
        \leq E\left( \frac{\flc+\clc}{2} \right) \cdot \epsilon
        \leq \left[ E(c) + m \left( \frac{\clc-\flc}{2} \right) \right] \cdot \epsilon.
    \end{align*}
    We therefore find that
    \begin{align*}
        \frac{\flc-\clc}{2} \leq \frac{\epsilon}{1-m \epsilon} E(c).
    \end{align*}
    Next, using that \( \err(x) \) is symmetric about \( (\flc+\clc)/2 \) on \( [\flc,\clc] \)
    \begin{align*}
        \int_{\flc}^{\clc} \abs{\err(x)}^k \d{x}
        = \frac{2}{k+1} \left( \frac{\clc-\flc}{2} \right)^{k+1}
        \leq \left[ \frac{2}{k+1} E(c)^{k+1} \right] \cdot \left( \frac{\epsilon}{1-L\epsilon} \right)^{k+1}.
    \end{align*}
    
    \noindent
    We now prove the \RtN case.
    Here we have that \( \flc-\clc \leq \epsilon \: E(c) \) so
    \begin{align*}
        \int_{\flc}^{\clc} \EE_{\rd}[ \abs{\err(x)}^k] \d{x}
        = \frac{2}{k^2+3k+2} \left( \clc-\flc \right)^{k+1}
        \leq \left[ \frac{2}{k^2+3k+2} E(c)^{k+1} \right] \cdot \epsilon^{k+1}.
        \tag*{\qedhere}
    \end{align*}
\end{proof}

Combining \cref{thm:err_int_abs_fp} with \cref{thm:err_endpoint} we obtain the following:
\begin{theorem}
    \label{thm:err_int_abs}
    Suppose that for all \( x \), \( E \) is piecewise linear on \( [\flx, \clx] \) with maximum slope \( m \).
    Then,
    \begin{enumerate}[label=(\roman*),resume]
    \item 
    if \( \rd:\R\to\mathbb{F} \) is \RtN, for \( \beta = 1/(1-m\epsilon) \),
    \begin{align*}
        \abs[\bigg]{ \int_{a}^{b} \abs{\err(x)}^k \d{x} }
        &\leq \left[ e_{\rd}(k) \int_{a}^{b} E(x)^k \d{x} \right] \cdot \epsilon^k
        + \left[ f_{\rd}(k)\left( (\beta E(a))^{k+1} + (\beta E(b))^{k+1} \right) \right] \cdot \epsilon^{k+1}
    \end{align*}
    \item 
        if \( \rd:\R\to\mathbb{F} \) is \SR, the above holds with \( \abs{ \err(x) }^k \) replaced by \( \EE_{\rd}[\abs{ \err(x) }^k] \) and \( \beta=1 \).
    \end{enumerate}
\end{theorem}

\begin{proof}%[Proof of \cref{thm:err_int_abs}]
    Similar to the proof of \cref{thm:err_int_F} we have,
    \begin{align*}
        \int_{a}^{b} \err(x)^n \d{x}
        &= \int_{\ceil{a}}^{\floor{b}} \err(x)^n \d{x} + \int_{a}^{\ceil{a}} \err(x)^n \d{x}
        + \int_{\floor{b}}^{b} \err(x)^n \d{x}
    \end{align*}
    By \cref{thm:err_int_abs_fp},
    \begin{align*}
        \int_{\ceil{a}}^{\floor{b}} \err(x)^n \d{x} 
        &\leq \left[ e_{\rd}(k) \int_{\ceil{a}}^{\floor{b}} E(x)^k \d{x} \right] \cdot \epsilon^k
        \leq \left[ e_{\rd}(k) \int_{a}^{b} E(x)^k \d{x} \right] \cdot \epsilon^k
    \end{align*}
    The result follows by applying \cref{thm:err_endpoint} to the remaining terms, accounting for the sign of the integrand.
    \qedhere
\end{proof}

\subsubsection{Two sided bounds for uniform meshes}

When \( \mathbb{F} \) contains uniformly spaced points, then we can take \( E \) to be constant so \cref{thm:err_int_abs_fp} becomes equality.
This provides something akin to a non-asymptotic version of Sheppard's corrections.
\begin{theorem}
\label{thm:err_int_abs_unif}
Suppose that \( \mathbb{F} = \{ a + 2 \delta z : z\in\mathbb{Z} \} \),
Then
\begin{enumerate}[label=(\roman*),resume]
    \item 
    if \( \rd:\R\to\mathbb{F} \) is \RtN, \( \epsilon = \delta \)
    \begin{align*}
        \abs[\Bigg]{ \int_{a}^{b}  \abs{ \err(x) }^k \d{x}
        - \left[ e_{\rd}(k) \int_{a}^{b} 1 \d{x} \right] \cdot \epsilon^k }
        \leq \left[ 2 f_{\rd}(k) \right] \cdot \epsilon^{k+1}
        %- \left[ c_{\rd}(k) \int_{a}^{b} E(x) \d{x} \right] \cdot \epsilon^k }
        %\leq \left[ d_{\rd}(k)\left( E(a)^{k+1} + E(b)^{k+1} \right) \right] \cdot \epsilon^{k+1}
    \end{align*}
    \item 
        if \( \rd:\R\to\mathbb{F} \) is \SR, \( \epsilon = 2 \delta \) and the above holds with \( \abs{\err(x)}^k \) replaced with \( \EE_{\rd}[\abs{\err(x)}^k] \).
    \end{enumerate}

\end{theorem}

\begin{proof}%[Proof of \cref{thm:err_int_abs_unif}]
    We first prove the \RtN case.
    By direct computation, for any \( c\in [a,b] \),
    \begin{align*}
        \int_{\flc}^{\clc} \abs{\err(x)}^k
        &= \frac{1}{k+1} \int_{\flc}^{\clc} \epsilon^k \d{x}.
    \end{align*}
    The result then follows by the same approach as \cref{thm:err_int_abs}.
    
    \noindent
    For the \SR case, as with past computations, we obtain the constant \( 2(k^2+3k+2)^{-1} \).
    The computation matches that of \cref{thm:err_int_abs_fp_end}.
    \qedhere
\end{proof}

\section{Example}

\label{ex:semicircle}
In this example, we illustrate how our techniques can be used to provide bounds on the mean, variance, and several other quantities corresponding to a rounded random variable.

Suppose \( X \) is distributed according to the semi-circle distribution with mean \( \mu \) and radius \( r \).
That is \( f_X(x) = \frac{2}{\pi r^2} \sqrt{r^2 - (x-\mu)^2} \) for \( -r \leq x-\mu \leq r \) and \( f_X(x) = 0 \) otherwise.
    We consider the effect of rounding to the set \( \{ a + 2\delta z:z\in\mathbb{Z} \} \) on the quantities,
\begin{align*}
    \abs{\EE[\rd(X)] - \EE[x] }
    ,&&
    \abs{\VV[\rd(X)] - \VV[x] }
    ,&&
    \abs{\EE[X\err(X)]}
    ,&&\text{and} &&
    \EE[\err(X)^2].
\end{align*}
For each value of \( \delta \), we consider many values of \( a \in [0,2\delta) \), taking the supremum over \( a \).
As seen in \cref{fig:error_semicircle}, even for a fixed value of \( \delta \), these quantities may vary drastically as \( a \) changes.
This illustrates the advantage of bounds which depend only on limited information about the set being rounded to.

We now bound the differences of the mean and variances based on increasing amounts of information:
\begin{enumerate}[label=(\alph*),nolistsep]
    \item[(a)] \( \abs{\rd(x) -x} \leq \delta \)
    \item[(b)] \( \rd:\R\to\mathbb{F} \) is \hyperlink{rounding_schemes}{`round to nearest'} 
    \item[(c)] \( \mathbb{F} \) has uniform spacing \( 2 \delta \).\
\end{enumerate}%

\begin{figure*}[t]\centering
    %\begin{overpic}[width=\textwidth,grid,tics=10]{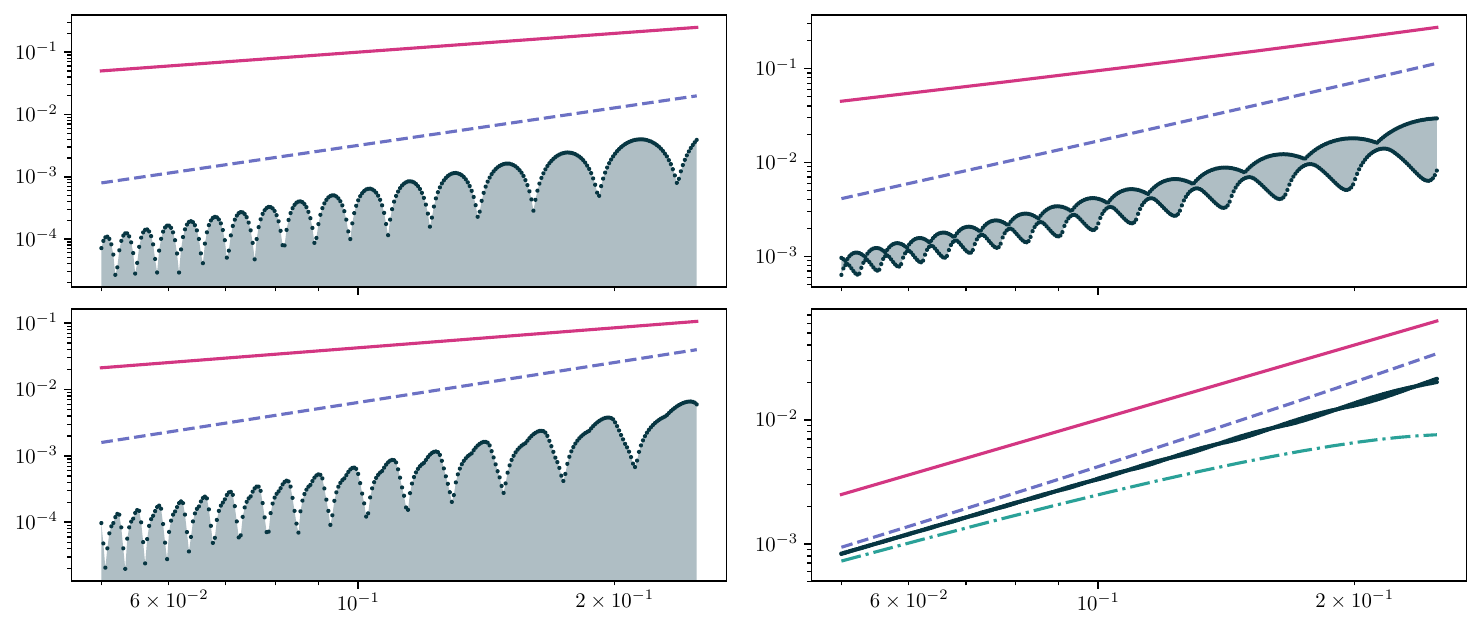}
    \begin{overpic}[width=\textwidth]{all_semicircle.pdf}
        \put (6,39) {\( \abs{ \EE[\rd(X)] - \EE[X] } \)}
        \put (56,39) {\( \abs{ \VV[\rd(X)] - \VV[X] } \)}
        \put (6,19) {\( \abs{ \EE[X\err(X)] } \)}
        \put (56,19) {\( \abs{ \EE[\err(X)^2] } \)}
        \put (20,-.5) {additive error bound \(\delta\)}
        \put (70,-.5) {additive error bound \(\delta\)}
    \end{overpic}

    \caption{Error bounds when \( X \) has semicircle distribution and is rounded to a uniformly spaced set \( \mathbb{F} = \{ a + 2\delta z : z\in \mathbb{Z}\} \).
    \emph{Legend}:
    The true error, for all values of \( a\in[0,2\delta) \), is show as the shaded region,
    bound (a) ({\protect\raisebox{-.2mm}{\protect\includegraphics[scale=1]{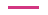}}}),
    bound (b) ({\protect\raisebox{-.2mm}{\protect\includegraphics[scale=1]{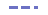}}}),
    bound (c) ({\protect\raisebox{-.2mm}{\protect\includegraphics[scale=1]{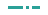}}}).
    }
    \label{fig:error_semicircle}
\end{figure*}

Using only that \( \abs{\err(x)} \leq \delta \), we find
\begin{align*}
    \abs{ \EE[ \err(X) ]} \leq \delta
    ,&&
    \abs{ \EE[ X\err(X) ]} \leq \EE[|X|] \cdot \delta
    \leq \left[ \frac{4r}{3\pi} \right] \cdot \delta
    ,&&
    \abs{ \EE[ \err(X)^2 ]} \leq \delta^2.
    \tag*{bound (a)}
\end{align*}
\iffalse
As a result,
\begin{align}
    \abs{ \VV[ \rd(X) ] - \VV[ X ]}
    \leq %\left[ \frac{8 r}{3\pi} \right] \cdot \delta + \delta^2 + 2 \cdot \delta^2 = 
    \left[ \frac{8 r}{3\pi} \right] \cdot \delta + 3 \cdot \delta^2 .
    \tag*{bound (a)}
\end{align}
\fi

If we additionally know that \( \rd \) is \hyperlink{rounding_schemes}{`round to nearest'}, then we can improve our bounds to quadratic in \( \delta \).
In particular, using \cref{thm:unimodal},
\begin{align*}
    \abs{ \EE[ \err(X) ] }
    &= \abs[\bigg]{ \int_{\mathbb{R}} \err(x) f_X(x) \d{x} }
    \leq \left[ \frac{1}{2} \left( \sup_x f_X(x) \right) \right] \cdot \delta^2
    = \left[ \frac{1}{\pi r} \right] \cdot \delta^2.
    \tag*{bound (b)}
\end{align*}
Likewise, but noting that \( x f_X(x) \) changes sign (hence the factor of 2),
\begin{align*}
     \abs{ \EE[ X \err(X) ] }
     &= \abs[\bigg]{ \int_{\mathbb{R}} x \err(x) f_X(x) \d{x} }
     \leq \left[ \frac{2}{2} \left( \sup_x |x f_X(x)| \right) \right] \cdot \delta^2
     \leq \left[ \frac{1}{\pi} \right] \cdot \delta^2.
     \tag*{bound (b)}
\end{align*}
Using only \( \abs{\err(x)} \leq \delta \), we already have that \( \EE[ \err(X) ]^2 = \mathcal{O}(\delta^2) \).
    However, using \cref{thm:unimodal} we can improve the constant to,
    \begin{align*}
        \abs{ \EE[ \err(X)^2 ] }
        &= \abs[\bigg]{ \int_{\mathbb{R}} \err(x)^2 f_X(x) \d{x} }
        \leq \left[ \frac{1}{3} \abs[\bigg]{ \int_{}^{} f_X(x) \d{x} } \right] \cdot \delta^2 
        + \left[ \frac{4}{3} \sup_x f_X(x) \right] \cdot \delta^3
        \leq \left[ \frac{1}{3} \right] \cdot \delta^2 + \left[ \frac{8}{3 \pi r} \right] \cdot \delta^3.
         \tag*{bound (b)}
    \end{align*}

    Finally, if our mesh is uniform we can use \cref{thm:err_int_abs_unif} to provide a two sided bound. 
    We note the upper bound is the same as our previous upper bound, but we can now provide an corresponding lower bound.
    \begin{align*}
        \abs[\bigg]{ \EE[\err(X)^2] - \frac{1}{3} \cdot \delta^2 } 
        &= \abs[\bigg]{ \int_{\mathbb{R}} \err(X)^2 f_X(x) \d{x} - \frac{1}{3} \cdot \delta^2 }
        \leq \left[ \frac{4}{3} \left( \sup_x f_X(x) \right) \right] \cdot \delta^3
        = \left[ \frac{8}{3 \pi r} \right] \cdot \delta^3.
        \tag*{bound (c)}
    \end{align*}

    These bounds, are shown in \cref{fig:error_semicircle}.
    In general we have that,
    \begin{align*}
        \VV[\rd(X)] - \VV[X] = \VV[\err(X)] + 2 \textrm{Co}\VV[X,\err(X)].
    \end{align*}
    Assuming that \( \EE[X] = 0 \), which we may do without loss of generality because we make no assumptions on \( a \), \( \textrm{Co}\VV[X,\err(X)] = \EE[X\err(X)] - \EE[X]\EE[\err(X)] = \EE[X\err(X)] \) so
    \begin{align*}
        \VV[\rd(X)] - \VV[X] = \EE[\err(X)^2] - \EE[\err(X)]^2 + 2 \EE[X\err(X)].
    \end{align*}
    Therefore, we obtain a bound on the absolute difference of the variances,
    \begin{align*}
        \abs{ \VV[ \rd(X) ] - \VV[ X ]}
        &\leq 2 \abs{ \EE[ X\err(X) ] } 
        + \max\{ \EE[ \err(X)^2 ] , \EE[ \err(X) ]^2 \}.
    \end{align*}

    Note that we could use that the error function is odd about any point in the mesh to to cancel some contribution to many of the integrals.
This would result in a small improvement in the bounds.

\section{Conclusions}\label{sec5}

In this paper, we provide non-asymptotic analysis for the effects of rounding on the moments of random variables.
Our analysis requires very limited assumptions on the actual structure of the set being rounded to and is therefore applicable to a range of settings.
Moreover, because our bounds are non-asymptotic, they can be used in the parameter ranges for precision encountered in practice.
Our analysis also sheds light on differences between \RtN and \SR.
It is well known that \SR is unbiased, and this property has been used to analyse the scheme in the deterministic setting \cite{connolly_higham_mary_21}.
However, we show that when rounding random variables, this unbiasedness is at the cost of slower convergence of absolute and higher moments.
Indeed, for fixed \( k \), the bounds for \SR are a constant (growing exponentially in \( k \)) factor worse than the bounds for \RtN.
This suggests that in settings where it is important to preserve not just the mean, but also higher moments, \SR may not always be better than \RtN.
In fact, it our analysis opens the possibility for picking randomized rounding schemes based on the relative accuracy constraints for different moments.
%\backmatter

\section*{Acknowledgments}

The author thanks Yu-Chen Cheng, Matt Loirg, Tom Togdon, and Ying-Jen Yang for the many helpful discussions and comments.

%\subsection*{Author contributions}

\subsection*{Financial disclosure}
This work is supported by the National Science Foundation Graduate Research Fellowship Program under Grant No. DGE-1762114.
Any opinions, findings, and conclusions orrecommendations expressed in this material are those of the author and do not necessarily reflect theviews of the National Science Foundation.

\subsection*{Conflict of interest}

The author declares no potential conflict of interests.

%\appendix

%\nocite{*}% Show all bib entries - both cited and uncited; comment this line to view only cited bib entries;
%\bibliography{wileyNJD-APA}%

\appendix
These sections do not appear in the peer-reviewed journal version.

\section{Asymptotic bounds}

While \cref{thm:err_int_abs_unif} requires a uniform spacing of points, we expect a similar result to hold if the points are locally uniformly spaced.
In order to make this intuition precise, we introduce a definition based on the study of the fine structure of the zeros of orthogonal polynomials.
\begin{definition}
    \label{def:unif_clock}
    Let \( \nu : [-1,1] \to \R \) be a continuous, non-vanishing, probability density function.
    We say a sequence of sets of points \( \{ \{ p_{n,i} \}_{i=1}^{n} \}_{n=1}^{\infty} \) has uniform clock behavior with respect to \( \nu \) if
    \begin{align*}
        \lim_{n\to\infty} \sup_{i<n} \{ | n ( p_{n,i+1} - p_{n,i} ) - \nu(p_{n,i})^{-1} | \} = 0.
    \end{align*}
\end{definition}

\begin{theorem}
    \label{thm:asymptotic}
    Suppose that \( \{ \{ p_{j,i} \}_{i=1}^{j} \}_{j=1}^{\infty} \) has uniform clock behavior with respect to \( \nu \).
    Let \( \rd_n : \R \to \{ p_{n,i} \}_{i=1}^{n} \) denote the rounding function to the \( n \)-th set and \( \err_n:\R\to\R \) the corresponding error function.
    Then, for any \( [a,b] \subseteq [-1,1] \), as \( n\to\infty \), with \( e_{\rd} \) as in \cref{table:constants},
    \begin{enumerate}[label=(\roman*),resume]
        \item 
            if \( \rd:\R\to\mathbb{F} \) is \RtN,
            \begin{align*}  
                %\lim_{n\to\infty} n^{k} 
                \abs[\bigg]{ \int_{a}^{b}  \abs{ \err_n(x) }^k \d{x} 
                - \left[ 2^{-k} e_{\rd}(k) \int_{a}^{b} \nu(x)^{-k} \d{x}  \right] \cdot n^{-k} } =  o(n^{-k}).
        \end{align*}
        \item 
            if \( \rd:\R\to\mathbb{F} \) is \SR, 
             \begin{align*}  
                 %\lim_{n\to\infty} n^{k} 
                 \abs[\bigg]{ \int_{a}^{b}  \EE_{\rd}\!\left[ \abs{ \err_n(x) }^k \right] \d{x} 
                 - \left[ e_{\rd}(k) \int_{a}^{b} \nu(x)^{-k} \d{x}  \right] \cdot n^{-k} } =  o(n^{-k}).
            \end{align*}
    \end{enumerate}
\end{theorem}

\begin{proof}
We first prove the \RtN case.
Let \( j = j(n) \) such that \( p_{n,j-1} < a < p_{n,j} \) and  \( j' = j'(n) \) such that \( p_{n,j'} < a < p_{n,j'+1} \).
Then, by direct computation,
\begin{align*}
    \int_{p_{n,j}}^{p_{n,j'}}  \abs{ \err_n(x) }^k \d{x} 
    &= \sum_{i=j}^{j'} \int_{p_{n,i}}^{p_{n,i+1}} \abs{\err_n(x)}^k \d{x}
    = \sum_{i=j}^{j'} 2^{-k} \frac{(p_{n,i+1} - p_{n,i} )^{k+1} }{k+1}.
\end{align*}
Clearly \( |a - p_{n,j}| = O(n^{-1}) \) and \( |b-p_{n,j'}| = O(n^{-1}) \) so
\begin{align*}
    \int_{a}^{b}  \abs{ \err_n(x) }^k \d{x} 
    = \lim_{n\to\infty} \sum_{i=j}^{j'} 2^{-k} \frac{(p_{n,i+1} - p_{n,i} )^{k+1} }{k+1}.
\end{align*}
Similarly, writing the left Reimann integral of \( \nu^{-k} \),
\begin{align*}
    \int_{a}^{b} \nu(x)^{-k} \d{x} 
    &= \lim_{n\to\infty} \sum_{i=j}^{j'} \nu(p_{n,i})^{-k} (p_{n,i+1} - p_{n,i}) 
    = \lim_{n\to\infty} \sum_{i=j}^{j'} (n(p_{n,i}-p_{n,i+1}))^k (p_{n,i+1} - p_{n,i}).
    \tag*{\qedhere}
\end{align*}
    
\noindent
For the \SR case we note that
\begin{align*}
    \int_{p_{n,j}}^{p_{n,j'}}  \EE_{\rd}[ \abs{ \err_n(x) }^k ] \d{x} 
    &= \sum_{i=j}^{j'} \int_{p_{n,i}}^{p_{n,i+1}} \abs{\err_n(x)}^k \d{x}
    = \sum_{i=j}^{j'} \frac{2}{k^2+3k+2} (p_{n,i+1} - p_{n,i} )^{k+1}.
\end{align*}
The rest of the proof follows in the identical 
\end{proof}

\section{Example (asymptotic bounds)}
In this example, we illustrate the asymptotic bounds from \cref{thm:asymptotic}. 

Recall that a Beta random variable with parameters \( \alpha,\beta > 0 \) has density function given by
\begin{align*}
    x\mapsto \frac{\Gamma(\alpha + \beta)}{\Gamma(\alpha)\Gamma(\beta)}x^{\alpha-1} (1-x)^{\beta-1}.
\end{align*}
We let \( Y \sim \operatorname{Beta}(\alpha,\beta) \) and take \( X = 2Y-1 \) (so that \( X \) is supported on \( [-1,1] \)).
    
We round this random variable to the roots of the Chebyshev polynomials of the first kind 
\begin{align*}
    p_{n,i} = \cos \left( \frac{2i-1}{n}\pi \right)
    ,&& i=1,\ldots, n
\end{align*}
which have uniform clock behavior with respect to \( \nu(x) = 1/(\pi\sqrt{(1+x)(1-x)}) \) on \( (-1,1) \).

We seek to compute the absolute moments \( \EE[\abs{\err_n(x)}^k] \) for \RtN and \( \EE[\EE_{\rd}[\abs{\err_n(x)}^k]] \) for \SR.
Respectively, these are directly computed by the integrals
\begin{align*}
    \int_{a}^{b}  f_X(x) \abs{ \err_n(x) }^k \d{x} 
    ,&&
    \int_{a}^{b} f_X(x) \EE_{\rd} \!\left[ \abs{ \err_n(x) }^k \right] \d{x}.
\end{align*}
By \cref{thm:asymptotic} and \cref{thm:prod_bound} we have asympototic approximations for these integrals
\begin{align*}
    \left[ 2^{-k} c_{\rd}(k) \int_{a}^{b} f_X(x) \nu(x)^{-k} \d{x}  \right] \cdot n^{-k}
    ,&&
    \left[ c_{\rd}(k) \int_{a}^{b} f_X(x) \nu(x)^{-k} \d{x}  \right] \cdot n^{-k}.
\end{align*}

All four quantities are show in \cref{fig:asymptotic} for \( k=1,2,3 \) and parameters \( \alpha = 5 \) and \( \beta = 10 \).
In particular, note that the convergence of \RtN is a constant (growing exponentialy in \( k \)) better than that of \SR.

\begin{figure*}[t]\centering
\includegraphics[width=\textwidth]{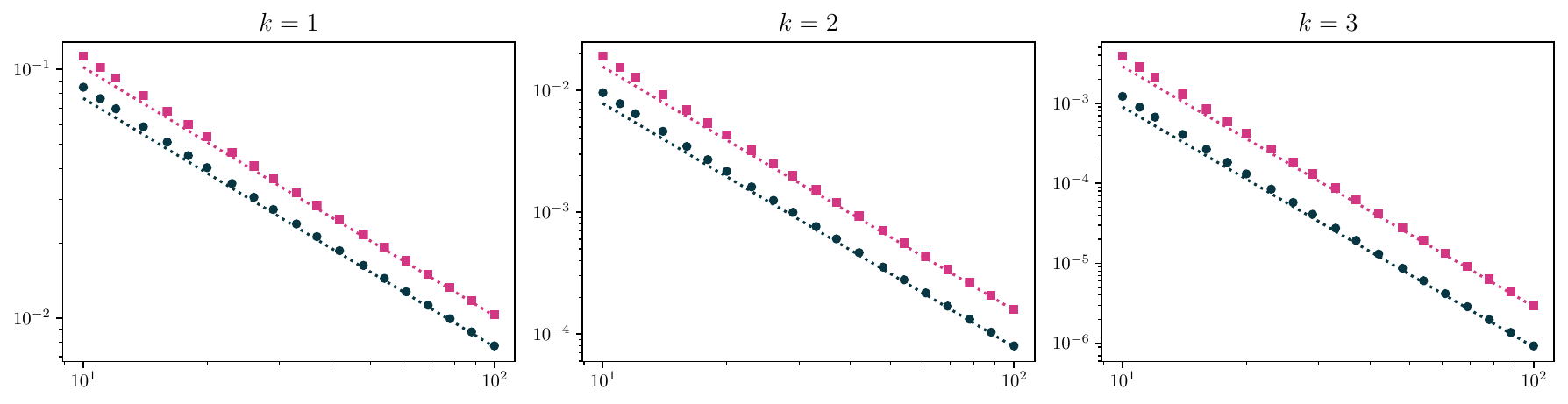}
\caption{
Note that the vertical range of the three plots is significantly different, since the asymptotic rate of convergence is \( n^{-k} \).
\emph{Legend}:
true integral RtN ({\protect\raisebox{-.2mm}{\protect\includegraphics[scale=1]{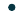}}}),
true integral SR ({\protect\raisebox{-.2mm}{\protect\includegraphics[scale=1]{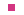}}}),
asymptotic RtN ({\protect\raisebox{-.2mm}{\protect\includegraphics[scale=1]{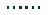}}}),
asymptotic SR ({\protect\raisebox{-.2mm}{\protect\includegraphics[scale=1]{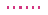}}}).
}
\label{fig:asymptotic}
\end{figure*}

\section{Additional details for selected proofs}

In this section we provide some additional details for proofs.

\begin{proof}[Proof of \cref{thm:stoch_err_ex}]
    Note that if \( \rd(x) = \flx \) then \( \err(x) = \flx - x \) whereas if \( \rd(x) = \clx \) then \( \err(x) = \clx - x \).
    Thus,
\begin{align*}
        \EE_{\rd}\!\left[ \err(x)^k \right]
        &= (\flx - x)^k \PP[\rd(x) = \flx] + (\clx-x)^k \PP[\rd(x) = \clx]
        \\&= (\flx - x )^k \left( 1- \frac{x-\flx}{\clx - \flx} \right)
        + (\clx - x )^k \left( \frac{x-\flx}{\clx - \flx} \right) 
        \\
        \EE_{\rd}\!\left[  \abs{ \err(x) } ^k \right]
        &= |\flx - x|^k \PP[\rd(x) = \flx] + |\clx-x|^k \PP[\rd(x) = \clx]
        \\&=  ( x - \flx)^k \left( 1- \frac{x-\flx}{\clx - \flx} \right)
        +  (\clx - x)^k \left( \frac{x-\flx}{\clx - \flx} \right).
        \tag*{\qedhere}
\end{align*}
\end{proof}

\begin{proof}[Proof of \cref{thm:err_endpoint} (detailed)]
    For \RtN \( c' = \rd(c) \). Note that \( \err(x) = \rd(c) - x \) does not change signs on \( [\min(c,c'),\max(c,c')] \).
    Thus
    \begin{align*}
        \abs[\Bigg]{ \int_{c}^{c'} (\rd(c)-x)^k \d{x} }
        = \int_{\min(c,c')}^{\max(c,c')} |c'-x|^k \d{x} 
    \end{align*}
    Next, since \( \err(x) \) is symmetric about \( \bar{c} = (\flc+\clc)/2 \) over the interval \( [\flc,\clc] \), we can, without loss of generality, assume that \( c' = \flx \) and adjust \( c \) accordingly so that the distance to \( \flc \) is the same as the previous distance to \( c' \).
    By direct computation we have that 
    \begin{align*}
        \int_{\flc}^{c} |c'-x|^k \d{x}
        \int_{\flc}^{c} |\flc-x|^k \d{x}
        = \frac{1}{k+1} (c-\flc)^{k+1}
    \end{align*}
    Thus, since \(  \abs{c-c'} \leq \epsilon \: E(c)  \),
      \begin{align*}
        \abs[\Bigg]{ \int_{c}^{c'} (\rd(c)-x)^k \d{x} }
%        = \frac{|c'-c|^{k+1}}{k+1}
        \leq \left[ \frac{ E(c) ^{k+1}}{k+1} \right] \cdot \epsilon^{k+1}.
    \end{align*}
    
    \noindent
    For \SR, \( \EE_{\rd}[\err(x)^k ] \) is also symmetric about \( \bar{c} \) over \( [\flc,\clc] \) and does not change signs on \( [\min(c,c'),\max(c,c')] \).
    Thus, akin to the previous case, and again using direct computation of the final integral,
    \begin{align*}
        \abs[\Bigg]{ \int_{c}^{c'} \EE[\err(x)^k] \d{x} }
        = \int_{c}^{c'} \abs[\big]{\! \EE[\err(x)^k] } \d{x} 
        \leq \int_{\flc}^{\bar{c}} \abs[\big]{\! \EE[\err(x)^k] } \d{x} 
        = \frac{1-(k+3)2^{-(k+1)}}{k^2+3k+2} (\clc-\flc)^{k+1}
    \end{align*}
    For \SR we have that \( \clc-\flc \leq \epsilon \: E(c) \) so
    \begin{align*}
        \abs[\Bigg]{ \int_{c}^{c'} \EE[\err(x)^k] \d{x} }
%        \leq \frac{1-(k+3)2^{-(k+1)}}{k^2+3k+2} (\clc-\flc)^{k+1}
        \leq \left[ \frac{1-(k+3)2^{-(k+1)}}{k^2+3k+2} E(c)^{k+1} \right] \cdot \epsilon^{k+1}.
        \tag*{\qedhere}
    \end{align*}
\end{proof}

\begin{proof}[Proof of \cref{thm:err_int_abs_fp} (detailed)]
    We first prove the \RtN case.
    By direct computation, 
    \begin{align*}
        \int_{\flc}^{\clc} \abs{\err(x)}^k \d{x}
        &= \frac{1}{k+1}\int_{\flc}^{\clc} \abs[\bigg]{ \err \left( \frac{\flc+\clc}{2} \right)  }^k \d{x}.
    \end{align*}
    Next note that since \( E(x) \) is linear over the interval, the tangent like to \( \epsilon^k E(x)^k \) at \( \bar{c} = (\flc+\clc)/2 \) lies entirely below \( \epsilon^k E(x)^k \).
    Moreover, the integral of this tangent like over the interval \( [\flc,\clc] \) is equal to the integral of the the constant function \( \epsilon^k E( \bar{c})^k \). 
    Therefore,
    \begin{align*}
        \frac{1}{k+1}\int_{\flc}^{\clc} \abs[\bigg]{ \err \left( \frac{\flc+\clc}{2} \right)  }^k \d{x}.
        \leq \left[ \frac{1}{k+1} \int_{\flc}^{\clc} E \left( \frac{\flc+\clc}{2} \right)^k \d{x} \right] \cdot \epsilon^k
        \leq \left[ \frac{1}{k+1} \int_{\flc}^{\clc} E(x)^k \d{x} \right] \cdot \epsilon^k.
    \end{align*}
    Combining these inequalities and using this and the fact that \( \err((\flc+\clc)/2) \leq \epsilon \: E((\flc+\clc)/2) \) we then find,
    \begin{align*}
        \int_{\flc}^{\clc} \abs{\err(x)}^k \d{x}
        \leq \left[ \frac{1}{k+1} \int_{\flc}^{\clc} E(x)^k \d{x} \right] \cdot \epsilon^k.
    \end{align*}

    \noindent
    For the \SR case, by direct computation we have
    \begin{align*}
        \int_{\flc}^{\clc} \EE_{\rd}\!\big[|\err(x)|^k\big] \d{x}
        = \frac{2}{k^2+3k+2} \left( \clc-\flc \right)^{k+1}
        = \frac{2}{k^2+3k+2} \int_{\flc}^{\clc} \left( \clc-\flc \right)^{k} \d{x}.
    \end{align*} 
    Thus, using the fact that \( (\flc-\clc) \leq E(x) \), we find
    \begin{align*}
        \int_{\flc}^{\clc} \EE_{\rd}\!\big[|\err(x)|^k\big] \d{x}
        \leq \left[ \frac{2}{k^2+3k+2} \int_{\flc}^{\clc} E(x)^k \d{x} \right] \cdot \epsilon^k
   \tag*{\qedhere}
    \end{align*}
\end{proof}

%\section*{Author Biography}

%\begin{biography}{\includegraphics[width=60pt,height=70pt,draft]{empty}}{\textbf{Author Name.} This is sample author biography text this is sample author biography text this is sample author biography text this is sample author biography text this is sample author biography text this is sample author biography text this is sample author biography text this is sample author biography text this is sample author biography text this is sample author biography text this is sample author biography text this is sample author biography text this is sample author biography text this is sample author biography text this is sample author biography text this is sample author biography text this is sample author biography text this is sample author biography text this is sample author biography text this is sample author biography text this is sample author biography text.}
%\end{biography}

\end{document}